\title{The Sprague-Grundy Functions of Saturations of Mis\`{e}re Nim}
\author{Yuki Irie\thanks{This work was partially carried out at Chiba University.}\\
\small  Research Alliance Center for Mathematical Sciences\\[-0.8ex]
\small Tohoku University\\[-0.8ex] 
\small Miyagi, Japan\\
\small\texttt {yirie@tohoku.ac.jp}\\
}
\date{}
\begin{document}

\maketitle
We consider mis\`{e}re Nim as a normal-play game obtained from Nim by removing the terminal position.
While explicit formulas are known for the Sprague-Grundy functions of Nim and Welter's game,
no explicit formula is known for that of mis\`{e}re Nim.
All three of these games can be considered as position restrictions of Nim.
What are the differences between them?
We point out that Nim and Welter's game are saturated, but mis\`{e}re Nim is not.
Moreover, we present explicit formulas for the Sprague-Grundy functions of saturations 
of mis\`{e}re Nim, which are obtained from mis\`{e}re Nim by adjoining some moves.

\section{Introduction}
\label{sec:orgheadline22}
\label{orgtarget1}

The loser in Nim is the winner in mis\`{e}re Nim.
Nim is a two-player game played with heaps of coins.
Two players alternately choose a heap and take at least one coin from it.
The player who takes the last coin wins in Nim and loses in mis\`{e}re Nim.
In general, the player who moves last wins in the normal-play convention and loses in the mis\`{e}re-play convention,
which has been extensively studied by using mis\`{e}re Sprague-Grundy functions, genera, and mis\`{e}re quotients
(see, for example, \cite{bouton-nim-1901, grundy-disjunctive-1956, conway-numbers-2001, plambeck-taming-2005, plambeck-misere-2008}).
In this paper, we will consider mis\`{e}re Nim as a normal-play game 
obtained from Nim by removing the terminal position (see Section \ref{orgtarget2}).

Impartial games including Nim and mis\`{e}re Nim can be analyzed
using their (normal) Sprague-Grundy functions \cite{sprague-uber-1935, grundy-mathematics-1939}.
Sprague-Grundy functions are defined recursively, and
computing them often leads to a combinatorial explosion.
However, explicit formulas are known for the Sprague-Grundy functions of some games such as Nim \cite{sprague-uber-1935, grundy-mathematics-1939} and Welter's game \cite{welter-theory-1954}, which is a position restriction of Nim.
Though mis\`{e}re Nim is also a position restriction of Nim,
no explicit formula is currently available for its Sprague-Grundy function.\footnote{By contrast, an explicit formula is known for the mis\`{e}re Sprague-Grundy function of Nim. See Remark \ref{orgtarget3}.}

What are the differences between Nim, Welter's game, and mis\`{e}re Nim?
One of the differences is that Nim and Welter's game are \emph{\(2\)-saturated}\footnote{The concept of saturations was first introduced in \cite{irie-psaturations-2018} to connect Welter's game with representations of symmetric groups.}, but mis\`{e}re Nim is not.
The purpose of this paper is to present an explicit formula for the Sprague-Grundy functions of 2-saturations 
of mis\`{e}re Nim, which are obtained from mis\`{e}re Nim by adjoining some moves.
More generally, for a mixed-radix number system \(\ybs\), we give an explicit formula for the Sprague-Grundy functions of \(\ybs\)-saturations of
mis\`{e}re Nim.

\subsection{Mixed-radix number systems}
\label{sec:orgheadline1}
\label{orgtarget4}
We introduce some notation for mixed-radix number systems.

Let \(\NN\) be the set of nonnegative integers.
Throughout this paper, \(\ybs\) denotes a sequence \((\ybs_L)_{L \in \NN} \Tin \NN^\NN\) with \(\ybs[L] \Tge 2\) for every \(L \Tin \NN\).
Define \(\ybsp[L] \Ttobe \ybs[0] \cdot \ybs[1] \cdots \ybs[L - 1]\). 
For example, if \(\ybs \Teq (2,3,2,\ldots)\), then \(\ybsp[0] \Teq 1\), \(\ybsp[1] \Teq 2\), and \(\ybsp[2] \Teq 6\).

Let \(\sgX \Tbein \NN\).
We denote by \(\sgX_L^{\ybs}\) the \(L\)th digit in the mixed base \(\ybs\) expansion of \(\sgX\), that is,
if \(\sgX_{\ge L}^{\ybs}\) is the integer quotient\footnote{\(\sgX_{\ge L}^{\ybs}\) is the unique integer satisfying \(\sgX - \sgX_{\ge L}^{\ybs} \ybsp[L] \in \set{0,1,\ldots,\ybsp[L] - 1}\).} of \(\sgX\) divided by \(\ybsp[L]\), then \(\sgX_L^{\ybs} \Teq \sgX_{\ge L}^{\ybs} \bmod \ybs[L]\),
where \(\sgX_{\ge L}^{\ybs} \bmod \ybs[L]\) is the remainder of \(\sgX_{\ge L}^{\ybs}\) divided by \(\ybs[L]\).
By definition, \[
 \sgX \Teq \sum_{L \in \NN} \sgX_L^{\ybs} \ybsp[L] \quad \tand \quad \sgX^{\ybs}_L \in \set{0,1,\ldots, \ybs[L] - 1}.
\]
For example, if \(\ybs[L] \Teq b\) for every \(L \Tin \NN\), then \(\sgX_L^{\ybs}\) is the \(L\)th digit in the ordinary base \(b\) expansion of \(\sgX\),
so it is convenient to write \(\ybs = b\).
For a negative integer \(\sgX\), we define \(\sgX^{\ybs}_L\) similarly; then
\[
 \sgX^{\ybs}_L + (- \sgX - 1)^{\ybs}_L \Teq \ybs[L] - 1.
\]
For example, \((-1)^{\ybs}_L \Teq \ybs[L] - 1\).
We drop the superscript \(\ybs\) when no confusion can arise. 

For \(\sgX \Tin \ZZ\), define
\[
 \ord_{\ybs}(n) \Ttobe \begin{cases}
 \min \Set{L \in \NN : \sgX_L \neq 0} \text{\ (} = \max \Set{L \in \NN : \ybsp[L] \text{\ divides\ } \sgX } \text{)} & \tif  \sgX \neq 0,\\
 \infty & \tif \sgX = 0.
 \end{cases}
\]
For example, if \(\ybs \Teq (3, 2, 5, 4, \ldots)\), then \(\ord_{\ybs}(54) \Teq \ord_{\ybs}(4 \cdot \ybsp[2] + \ybsp[3]) = 2\).

\subsection{Subtraction games}
\label{sec:orgheadline9}
\label{orgtarget2}
We define subtraction games, their mis\`{e}re versions, and Sprague-Grundy functions.

Fix a positive integer \(\numcoins\) and let \(\Omega\) denote \(\set{0,1,\ldots, \numcoins - 1}\).
Let \(\cP \Tbesubseteq \NN^\numcoins\) and \(\sgC \Tbesubseteq \NN^\numcoins \setminus \set{(0,\ldots,0)}\).
Define \(\Gamma(\cP, \sgC)\) to be the digraph with vertex set \(\cP\) and edge set 
\[
 \set{(X, Y) \in \cP^2 : X - Y \in \sgC}.
\]
We call \(\Gamma(\cP, \sgC)\) a \emph{subtraction game} or a \emph{take-away game}.
The vertex set \(\cP\) is called the \emph{position set} of \(\Gamma(\cP, \sgC)\).

\begin{remark}
 \comment{Rem. how to play}
\label{sec:orgheadline2}
We can consider \(\Gamma(\cP, \sgC)\) as a two-player game as follows.
Before the game begins, we pick an initial position \(X_0 \in \cP\).
The first player subtracts some \(C_0 \in \sgC\) from \(X_0\) so that \(X_0 - C_0 \Tisin \cP\).
Let \(X_1 \Tbe X_0 - C_0\).
Similarly, the second player subtracts some \(C_1 \Tin \sgC\) from \(X_1\) so that \(X_1 - C_1 \Tisin \cP\).
In this way, the two players alternately subtract some \(C \Tin \sgC\) from the current position.
The player who subtracts last wins.
 
\end{remark}

 \begin{example}[Nim]
 \comment{Exm. [Nim]}
\label{sec:orgheadline3}
\label{orgtarget5}
Let
\[
 \sgC_{[1]} \Tbe \set{\ccC \in \NN^\numcoins : \wt(C) = 1},
\]
where \(\wt(C)\) is the Hamming weight of \(C\), that is, the number of nonzero components of \(C\).
The subtraction game \(\Gamma(\NN^\numcoins, \sgC_{[1]})\) is called \emph{Nim}.
For example, in Nim, the first player will win if we start from \((1,0)\);
indeed, he can subtract \((1,0) \Tin \sgC_{[1]}\) from \((1,0)\),
but the second player cannot subtract any \(C \Tin \sgC_{[1]}\) from \((0,0)\).
 
\end{example}

\comment{Misere}
\label{sec:orgheadline4}
\label{orgtarget6}
We next define the mis\`{e}re version of a subtraction game.
Let \(X\) be a position in a subtraction game \(\Gamma(\cP, \sgC)\).
If \(X - C \Tisin \cP\) for some \(C \Tin \sgC\), then \(X - C\) is called an \emph{option} of \(X\) (in \(\Gamma(\cP, \sgC)\)).
If \(X\) has no options, then \(X\) is called a \emph{terminal position}.
Let \(\cP'\) be the set of non-terminal positions in \(\Gamma(\cP, \sgC)\).
The subtraction game \(\Gamma(\cP', \sgC)\) is called the \emph{mis\`{e}re version} of \(\Gamma(\cP, \sgC)\) \cite{kahane-hexad-2001}.

 \begin{example}[mis\`{e}re Nim]
 \comment{Exm. [mis\`{e}re Nim]}
\label{sec:orgheadline5}
Let \(\cPmis \Tbe \cPmis^\numcoins = \NN^\numcoins \setminus \set{(0,\ldots, 0)}\).
Then \(\cPmis\) is the set of non-terminal positions of Nim,
so the mis\`{e}re version of Nim is \(\Gamma(\cPmis, \sgC_{[1]})\).
We call \(\Gamma(\cPmis, \sgC_{[1]})\) \emph{mis\`{e}re Nim}.\footnote{Mis\`{e}re Nim is usually defined to be Nim in mis\`{e}re play, so the definition of mis\`{e}re Nim used in this paper is
slightly different from the standard one. However, for \(X \in \NN^\numcoins \setminus \set{(0,\ldots, 0)}\), the outcomes of \(X\) in the two of these mis\`{e}re Nim are the same.
In other words, when the initial position is \(X\), the first player can win in one of the two mis\`{e}re Nim if and only if he can win in the other mis\`{e}re Nim.}
In mis\`{e}re Nim, 
the first player will lose if we start from \((1,0)\) because this position is terminal.
 
\end{example}

\comment{Sprague-Grundy}
\label{sec:orgheadline6}
We now define Sprague-Grundy functions.
See, for example, \cite{berlekamp-Winning-2001, conway-numbers-2001, albert-lessons-2007, siegel-combinatorial-2013} for details.
Let \(\Gamma \Tbe \Gamma(\cP, \sgC)\) and \(X \Tbein \cP\).
The \emph{Sprague-Grundy value} \(\sg(X)\) of \(X\) is defined to be
the minimum nonnegative integer \(\sgX\) such that \(\sgX\) is not equal to the Sprague-Grundy value of any option of \(X\), that is,
\[
 \sg(X) \Teq \sg_{\Gamma}(X) = \mex \set{\sg_{\Gamma}(Y) : Y \text{\ is an option of\ } X},
\]
where \(\mex S \Teq \min \set{\sgX \in \NN : \sgX \not \in S}\).
Note that if \(X\) is a terminal position, then \(\sg(X) \Teq \mex \emptyset = 0\).
The nonnegative integer-valued function \(\sg: \cP \ni X \mapsto \sg(X) \in \NN\)
is called the \emph{Sprague-Grundy function} of the subtraction game \(\Gamma\).
For a position \(X\),
the following two statements are equivalent:
\begin{enumerate}
\item \(\sg(X) \Teq 0\).
\item The second player can win when the initial position is \(X\).
\end{enumerate}

 \begin{example}
 \comment{Exm.}
\label{sec:orgheadline7}
\label{orgtarget7}
Let \(\numcoins \Tbe 2\).
We calculate the Sprague-Grundy values of some positions in mis\`{e}re Nim (see Table \ref{tab:orgtable3} in Example \ref{orgtarget8}).
Since \((0,1)\) and \((1,0)\) are terminal positions, their Sprague-Grundy values are 0.
Hence \(\sg((0,2)) \Teq \sg((1,1)) = \sg((2,0)) = \mex \set{0} = 1\).
This implies that \(\sg((1,2)) \Teq \sg((2,1)) = \mex \set{0,1} = 2\),
so \(\sg((2,2)) \Teq \mex \set{1,2} = 0\).
We can verify that the second player can win when \((2,2)\) is the initial position.
 
\end{example}

\begin{remark}
 \comment{Rem.}
\label{sec:orgheadline8}
\label{orgtarget3}
The Sprague-Grundy function of mis\`{e}re Nim is different from
the mis\`{e}re Sprague-Grundy function \(\mathscr{G}^{-}\) of Nim defined in \cite{conway-numbers-2001}.
The domain of the former function is \(\NN^\numcoins \setminus \set{(0,\ldots, 0)}\) and that of the latter one is \(\NN^\numcoins\).
Here, for \(X \in \NN^\numcoins\), we can compute \(\mathscr{G}^-(X)\) as follows:
\[
 \mathscr{G}^{-}(X) \Teq \begin{cases}
 1 & \tif X = (0, \ldots, 0), \\
 \mex \set{\mathscr{G}^-(Y) : Y \text{\ is an option of\ } X} & \totherwise.
 \end{cases}
\]
The value \(\mathscr{G}^{-}(X)\) is generally not equal to \(\sg_{\Gamma}(X)\),
where \(\Gamma\) is mis\`{e}re Nim \(\Gamma(\cPmis, \sgC_{[1]})\).
For example, \(\mathscr{G}^{-}((0,2)) \Teq 2 \neq 1 = \sg_{\Gamma}((0,2))\).
However, \(\mathscr{G}^{-}(X) \Teq 0\) if and only if \(\sg_{\Gamma}(X) \Teq 0\).\footnote{As we have mentioned, \(\mathscr{G}^{-}(X)\) can be written down explicitly.
If \(\max X \Tisge 2\), then \(\mathscr{G}^{-}(X) \Teq \nsigma[2](X)\), where \(\nsigma[2](X)\) is the Nim sum of the components of \(X\).
If \(\max X \Tislt 2\), then \(\mathscr{G}^{-}(X) \Teq 1 - \nsigma[2](X)\).}
 
\end{remark}

\subsection{\texorpdfstring{\(\ybs\)}{beta}-Saturations}
\label{sec:orgheadline13}
\label{orgtarget9} 

\comment{Def 2-sat}
\label{sec:orgheadline10}
We define \(\ybs\)-saturations of subtraction games.

Elements in \(\NN^\numcoins\) will be denoted by upper-case letters, and components of them by lower-case letters with superscripts.
For example, \(C = (c^0, \ldots, c^{\numcoins - 1}) \in \NN^\numcoins\).
Define
\[
 \sgCord[\ybs] = \sgCord[\ybs,\numcoins] = \Set{\ccC \in \NN^\numcoins \setminus \set{(0,\ldots,0)} : \ord_{\ybs}\left(\sum_{i \in \Omega} \ccc^i\right) = \mord_{\ybs}(C)},
\]
where
\[
 \mord_{\ybs}(C) \Teq \min \set{\ord_{\ybs}(c^i) : i \in \Omega}.
\]
For example, \((2,2,6) \Tisin \sgCord[2]\) and \((2,2,4) \not \in \sgCord[2]\) because
\[
 \ord_2(2 + 2 + 6) \Teq 1 = \mord_2((2,2,6)) \ \tand \ \ord_2(2 + 2 + 4) = 3 > 1 =  \mord_2((2,2,4)).
\]
A subtraction game \(\Gamma(\cP, \sgC)\) is said to be \emph{\(\ybs\)-saturated} if its Sprague-Grundy function
is equal to that of \(\Gamma(\cP, \sgCord[\ybs])\). If \(\Gamma(\cP, \sgC)\) is \(\ybs\)-saturated,
then we also say that it is a \emph{\(\ybs\)-saturation} of \(\Gamma(\cP, \sgC_{[1]})\).

 \begin{example}[Nim and Welter's game]
 \comment{Exm. [Nim and Welter's game]}
\label{sec:orgheadline11}
\label{orgtarget10}
Let
\[
 \cP_{\text{Wel}} \Tbe \set{X \in \NN^\numcoins : x^i \neq x^j \ttext{whenever} i \neq j}.
\]
The subtraction game \(\Gamma(\cP_{\text{Wel}}, \sgC_{[1]})\) is called \emph{Welter's game}.
It is known that Nim and Welter's game are 2-saturated \cite{fraenkel-Nimhoff-1991, blass-how-1998, irie-psaturations-2018, welter-theory-1954},
that is, for \(\cP \Tin \set{\NN^\numcoins, \cP_{\text{Wel}}}\), the Sprague-Grundy function of \(\Gamma(\cP, \sgC_{[1]})\) is equal to that of \(\Gamma(\cP, \sgCord[2])\).
Moreover, \(\Gamma(\NN^\numcoins, \sgC)\) is 2-saturated if and only if \(\sgC_{[1]} \subseteq \sgC \subseteq \sgCord[2]\) \cite{blass-how-1998}.
 
\end{example}

 \begin{example}
 \comment{Exm.}
\label{sec:orgheadline12}
\label{orgtarget8}
Let \(\numcoins \Tbe 2\).
We compare the Sprague-Grundy function of mis\`{e}re Nim \(\Gamma(\cPmis, \sgC_{[1]})\) with that of \(\Gamma(\cPmis, \sgCord[2])\) (see Table \ref{tab:orgtable3}).
We first consider the position \((2,2)\).
The Sprague-Grundy value of \((2,2)\) equals 0 in \(\Gamma(\cPmis, \sgC_{[1]})\); 
however, it equals 3 in \(\Gamma(\cPmis, \sgCord[2])\) because \((0, 1)\) is an option of \((2, 2)\) in \(\Gamma(\cPmis, \sgCord[2])\).
Thus mis\`{e}re Nim is not 2-saturated when \(\numcoins = 2\).
We next compute the Sprague-Grundy value of \((2, 3)\) in \(\Gamma(\cPmis, \sgCord[2])\).
Since \((2, 3) - (0, 1) \Teq (2, 2) \not \in \sgCord[2]\) and \((2, 3) - (1, 0) \Teq (1, 3) \not \in \sgCord[2]\),
\((2, 3)\) has no options with Sprague-Grundy value 0, and hence its Sprague-Grundy value is 0.

\begin{table}[H]
\caption{\label{tab:orgtable3}
Sprague-Grundy values in $\Gamma(\cPmis, \sgC_{[1]})$ and $\Gamma(\cPmis, \sgCord[2])$.}
\begin{tabular}{cc}
\begin{minipage}{0.49\hsize}
\begin{table}[H]
\centering
\begin{tabular}{c | p{0.4em} p{0.4em} p{0.4em} p{0.4em} p{0.4em} p{0.4em} p{0.4em} p{0.4em} p{0.4em}}
   & 0 & 1 & 2 & 3 & 4 & 5 & 6 & 7 & 8 \\ 
\hline
 0 &   & 0 & 1 & 2 & 3 & 4 & 5 & 6 & 7 \\  
 1 & 0 & 1 & 2 & 3 & 4 & 5 & 6 & 7 & 8 \\  
 2 & 1 & 2 & 0 & 4 & 5 & 3 & 7 & 8 & 6 \\  
 3 & 2 & 3 & 4 & 0 & 1 & 6 & 8 & 5 & 9 \\  
 4 & 3 & 4 & 5 & 1 & 0 & 2 & 9 & 10 & 11 \\  
 5 & 4 & 5 & 3 & 6 & 2 & 0 & 1 & 9 & 10 \\  
 6 & 5 & 6 & 7 & 8 & 9 & 1 & 0 & 2 & 3 \\  
7 & 6 & 7 & 8 & 5 & 10 & 9 & 2 & 0 & 1 \\  
 8 & 7 & 8 & 6 & 9 & 11 & 10 & 3 & 1 & 0 \\  
\end{tabular}
\end{table}
\end{minipage}
\begin{minipage}{0.49\hsize}
\begin{table}[H]
\centering
\begin{tabular}{c | p{0.4em} p{0.4em} p{0.4em} p{0.4em} p{0.4em} p{0.4em} p{0.4em} p{0.4em} p{0.4em}}
  &  0 & 1 & 2 & 3 & 4 & 5 & 6 & 7 & 8 \\ 
\hline
 0 &   & 0  & 1  & 2  & 3  & 4  & 5  & 6  & 7    \\ 
 1 & 0  & 1  & 2  & 3  & 4  & 5  & 6  & 7  & 8    \\ 
 2 & 1  & 2  & 3  & 0  & 5  & 6  & 7  & 4  & 9    \\ 
 3 & 2  & 3  & 0  & 1  & 6  & 7  & 4  & 5  & 10    \\ 
 4 & 3  & 4  & 5  & 6  & 7  & 0  & 1  & 2  & 11    \\ 
 5 & 4  & 5  & 6  & 7  & 0  & 1  & 2  & 3  & 12    \\ 
 6 & 5  & 6  & 7  & 4  & 1  & 2  & 3  & 0  & 13    \\ 
 7 & 6  & 7  & 4  & 5  & 2  & 3  & 0  & 1  & 14    \\ 
 8 & 7  & 8  & 9  & 10  & 11  & 12  & 13  & 14  & 15    \\ 
\end{tabular}
\end{table}
\end{minipage}
\end{tabular}
\end{table}
 
\end{example}

\subsection{A formula for \texorpdfstring{\(\ybs\)}{beta}-saturations of mis\`{e}re Nim}
\label{sec:orgheadline20}
\label{orgtarget11}
We present an explicit formula for the Sprague-Grundy functions of \(\ybs\)-saturations of mis\`{e}re Nim.

Let \(\range{\ybs[L]}\) denote \(\set{0,1,\ldots, \ybs[L] - 1}\) equipped with the following two operations:
for \(a, b \Tin \range{\ybs[L]}\),
\[
 a \oplus b \Teq (a + b) \bmod \ybs[L]  \quad \tand \quad a \ominus b = (a - b) \bmod \ybs[L].
\]
In other words, \(\range{\ybs[L]}\) is the additive group of integers modulo \(\ybs[L]\).
For example, in \(\range{5}\), \(2 \oplus 4 \Teq 1\) and \(2 \ominus 4 \Teq 3\).
For \(\sgX \Tin \NN\), we will think of \(\sgX_L^{\ybs}\) as an element of \(\range{\ybs[L]}\).
Let \(\Zadic\) denote \(\prod_{L \in \NN} \range{\ybs[L]}\) equipped with the following two operations: 
for \(\sgX, \sgY \Tin \Zadic\),
\[
 \sgX \oplus \sgY \Teq \bnums{\sgX_L \oplus \sgY_L}_{L \in \NN} \quad \tand \quad \sgX \ominus \sgY = \bnums{\sgX_L \ominus \sgY_L}_{L \in \NN},
\]
where \(\sgX = \bnums{\sgX_L}_{L \in \NN}\) and \(\sgY = \bnums{\sgY_L}_{L \in \NN}\).
For \(\sgX \in \Zadic\), define
\[
 \ord_{\ybs}(\sgX) \Ttobe \begin{cases}
 \min \Set{L \in \NN : \sgX_L \neq 0}& \tif  \sgX \neq \bnums{0,0,\ldots},\\
 \infty & \tif \sgX = \bnums{0,0,\ldots}.
 \end{cases}
\]
Consider the map \(\Phi : \ZZ \ni \sgX \mapsto \bnums{\sgX^{\ybs}_L}_{L \in \NN} \in \Zadic\).
For \(n \in \ZZ\), we identify \(\Phi(n)\) with \(n\).
Let \(\Nb\) denote \(\Phi(\NN)\).
For \(n \in \Nb\), it is convenient to write \(\sgX = \bnums{\sgX_0, \sgX_1, \ldots, \sgX_{L - 1}}\) when \(\sgX_{\ge L} \Teq 0\). 
For example, if \(\ybs = 10\), then \(24 = \bnums{4, 2, 0, \ldots} = \bnums{4, 2} \in \Nb[10]\).

For \(X \Tin \Nb^\numcoins\), define
\begin{equation}
\label{orgtarget12}
 \nsigma[\ybs](X) \Ttobe \nsigma[\ybs, \numcoins](X) = x^0 \oplus \cdots \oplus x^{\numcoins - 1} \text{\ (}\in \Nb \text{)}.
\end{equation}
Let \(\nsigma[\ybs]<L>(X) \Tbe (\nsigma[\ybs](X))_L\) (\(=\) the \(L\)th digit of \(\nsigma[\ybs](X)\)) for \(L \in \NN\).

 \begin{example}[\(\ybs\)-saturations of Nim \cite{irie-mixedradix-2018}]
 \comment{Exm. [\(\ybs\)-saturations of Nim \cite{irie-mixedradix-2018}]}
\label{sec:orgheadline14}
\label{orgtarget13}
Let \(X\) be a position in a \(\ybs\)-saturation of Nim.
Then 
\[
 \sg(X) \Teq \nsigma[\ybs](X).
\]
For example, if \(\ybs \Teq (3,2,5,\ldots)\) and \(X \Teq (16, 27) = (\bnums{1, 1, 2}, \bnums{0, 1, 4}) \in \Nb^2\), then
\[
 \sg(X) \Teq \nsigma[\ybs](X) = \bnums{1 \oplus 0, 1 \oplus 1, 2 \oplus 4} = \bnums{1, 0, 1} = 7.
\]
 
\end{example}

 \begin{example}[\(b\)-saturations of Welter's game \cite{irie-psaturations-2018}]
 \comment{Exm. [\(b\)-saturations of Welter's game \cite{irie-psaturations-2018}]}
\label{sec:orgheadline15}
\label{orgtarget14}
Let \(X\) be a position in a \(b\)-saturation of Welter's game,
where \(b\) is an integer greater than 1. Then
\[
 \sg(X) \Teq \nsigma[b](X) \oplus \bigoplus_{i < j} \left(b^{\ord_b(x^i - x^j) + 1} - 1 \right).
\]
For example, if \(b = 3\) and \(X = (1, 4) \in \Nb[3]^2\), then \(\sg(X) = 1 \oplus 4 \oplus (3^2 - 1) = 1\).
 
\end{example}
\comment{misere}
\label{sec:orgheadline16}
We now give an explicit formula for the Sprague-Grundy functions of \(\ybs\)-saturations of mis\`{e}re Nim.
For \(X \Tin \cPmis\) (\(\subseteq \Nb^\numcoins\)), define
\begin{equation}
\label{orgtarget15}
\mphi[\ybs](X) \Ttobe \nsigma[\ybs](X) \oplus \left(\ybsp[\mord_{\ybs}(X) + 1] - 1 \right).
\end{equation}

 \begin{theorem}
 \comment{Thm. 2-misere}
\label{sec:orgheadline17}
\label{orgtarget16}
The Sprague-Grundy function of a \(\ybs\)-saturation of mis\`{e}re Nim
is equal to \(\mphi[\ybs]\), that is,
\[
 \sg(X) \Teq \mphi[\ybs](X)
\]
for every position \(X\) in a \(\ybs\)-saturation of mis\`{e}re Nim.
 
\end{theorem}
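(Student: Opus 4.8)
My plan is to reduce to a single, maximal game and then verify the defining mex-recursion. By the very definition of \(\ybs\)-saturation, a \(\ybs\)-saturation of mis\`{e}re Nim is a game \(\Gamma(\cPmis, \sgC)\) whose Sprague-Grundy function agrees with that of \(G := \Gamma(\cPmis, \sgCord[\ybs])\); hence it suffices to prove that \(\mphi[\ybs]\) is the Sprague-Grundy function of \(G\). Every move subtracts a nonzero \(C \in \sgCord[\ybs]\) with nonnegative components, so it strictly decreases \(\sum_{i} x^i\); thus \(G\) is well-founded and its Sprague-Grundy function \(\sg\) is the unique \(\NN\)-valued function on \(\cPmis\) with \(\sg(X) = \mex\{\sg(Y) : Y \text{ is an option of } X\}\). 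Since (i) and (ii) below together say exactly \(\mphi[\ybs](X) = \mex\{\mphi[\ybs](Y) : Y \text{ option}\}\), it is enough to establish, for every \(X \in \cPmis\):
\begin{enumerate}
\item[(i)] \(\mphi[\ybs](Y) \neq \mphi[\ybs](X)\) for every option \(Y\) of \(X\) in \(G\); and
\item[(ii)] for every integer \(v\) with \(0 \le v < \mphi[\ybs](X)\), some option \(Y\) of \(X\) in \(G\) satisfies \(\mphi[\ybs](Y) = v\).
\end{enumerate}

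Abbreviate \(\sigma := \nsigma[\ybs]\), \(m(X) := \mord_{\ybs}(X)\), and \(c_X := \ybsp[\mord_{\ybs}(X) + 1] - 1\), so that \(\mphi[\ybs](X) = \sigma(X) \oplus c_X\) and the \(L\)th digit of \(c_X\) is \(\beta_L - 1\) for \(L \le m(X)\) and \(0\) for \(L > m(X)\). Since the \(L\)th digit of \(\sigma(X)\) vanishes for \(L < m(X)\), the digit of \(\mphi[\ybs](X)\) at position \(L\) equals \(\beta_L - 1\) for \(L < m(X)\), equals \(\sigma_L(X) \ominus 1\) at \(L = m(X)\), and equals \(\sigma_L(X)\) for \(L > m(X)\). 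Next I record how a move \(Y = X - C\) affects these data. Writing \(k := \mord_{\ybs}(C) = \ord_{\ybs}(\sum_i c^i)\), each \(c^i\) is a multiple of \(\ybsp[k]\), so \(X\) and \(Y\) have the same digits below position \(k\); in particular \(\sigma_L(X) = \sigma_L(Y)\) for \(L < k\). The identity \(\ord_{\ybs}(\sum_i c^i) = k\) is precisely what pins down the lowest digit at which \(\sigma\) may change. I will use the formula for \(\ybs\)-saturations of Nim (Example~\ref{orgtarget13}) as a black box: the Sprague-Grundy function of \(\Gamma(\NN^\numcoins, \sgCord[\ybs])\) is \(\sigma\), equivalently, for every position no option has the same \(\sigma\)-value and every strictly smaller value is the \(\sigma\)-value of some option.

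For (i), if \(m(X) < k\) then the lowest nonzero digit of \(X\) lies below the support of \(C\) and is untouched, so \(m(Y) = m(X)\), whence \(c_X = c_Y\); then \(\mphi[\ybs](X) = \mphi[\ybs](Y)\) would force \(\sigma(X) = \sigma(Y)\), and since \(Y \neq (0,\ldots,0)\) the move is also legal in normal saturated Nim, contradicting Example~\ref{orgtarget13}. If \(m(X) \ge k\), then all components vanish below \(k\), and a short computation at digit \(k\) shows that \(\ord_{\ybs}(\sum_i c^i) = k\) forces \(\sigma_k(X) \neq \sigma_k(Y)\) while \(\sigma_L(X) = \sigma_L(Y)\) for \(L < k\); comparing \(\mphi[\ybs](X)\) and \(\mphi[\ybs](Y)\) at the decisive digit, where the maximal-digit patterns of \(c_X\) and \(c_Y\) are determined by \(m(X), m(Y) \ge k\), one checks these correction terms cannot compensate, so \(\mphi[\ybs](X) \neq \mphi[\ybs](Y)\). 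This disposes of (i).

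Condition (ii) is the heart of the argument and the step I expect to be hardest. Given \(v < \mphi[\ybs](X)\), let \(\ell\) be the highest digit at which \(v\) and \(\mphi[\ybs](X)\) differ, so that the digit of \(v\) at \(\ell\) is smaller and the two agree above \(\ell\). When \(\ell > m(X)\) I intend to decrease a single suitably chosen component so that \(\sigma\) changes only at digits \(\ge \ell\) and the level \(m(Y) = m(X)\) is preserved, giving \(\mphi[\ybs](Y) = \sigma(Y) \oplus c_X = v\) by the Nim reachability. When \(\ell \le m(X)\) the move must simultaneously lower the mord level so that the new correction term \(c_Y\) reproduces the prescribed lower digits of \(v\); this is delicate because \(c_Y\) depends on \(m(Y)\), which is itself determined by the move I am constructing. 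The three recurring obstructions are: guaranteeing that the constructed vector \(C\) satisfies \(\ord_{\ybs}(\sum_i c^i) = \mord_{\ybs}(C)\), i.e.\ \(C \in \sgCord[\ybs]\); keeping \(Y\) componentwise nonnegative; and, crucially, keeping \(Y \neq (0,\ldots,0)\), since the move into the terminal position is forbidden. This last constraint is exactly what distinguishes mis\`{e}re Nim from Nim and is the source of the correction term \(\ybsp[m(X)+1]-1\); indeed the value \(v = 0\) already forces \(m(Y) = 0\) together with a fully specified bottom digit of \(\sigma(Y)\). Because \(c_X\) has the same ``all digits below a threshold are maximal'' shape as the pairwise correction terms in the formula for \(b\)-saturations of Welter's game (Example~\ref{orgtarget14}), I anticipate that the digit manipulations developed there transfer, and that the case \(\ell \le m(X)\) of (ii) is where the real work lies.
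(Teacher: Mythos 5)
Your reduction to the mex recursion for \(\Gamma(\cPmis, \sgCord[\ybs])\) is correct, and your part (i) is essentially the paper's argument (compare digits at position \(k = \mord_{\ybs}(C)\), using that \([k \le \mord_{\ybs}(X)] = [k \le \mord_{\ybs}(X - C)]\)). The gap is part (ii): this is the entire substance of the theorem, and you leave it as a plan ("I intend", "I anticipate") rather than a proof. Worse, the one concrete mechanism you do propose is false. In your case \(\ell > m(X)\) you want a move preserving \(\mord_{\ybs}(Y) = \mord_{\ybs}(X)\); but any \(Y\) with \(\mord_{\ybs}(Y) = \mord_{\ybs}(X)\) has \(\mphi[\ybs](Y)\)-digit equal to \(\ybs[L] - 1\) at every position \(L < \mord_{\ybs}(X)\), while the digits of \(v\) below \(\ell\) are completely unconstrained. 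Concretely, take \(\ybs = 2\), \(X = (2,8)\), so \(\mphi[2](X) = \nsigma[2](X) \oplus (2^2 - 1) = 10 \oplus 3 = 9\) and \(\mord_{2}(X) = 1\), and take \(v = 2\); then \(\ell = 3 > 1 = \mord_{2}(X)\), yet every \(Y\) with \(\mphi[2](Y) = 2\) must have \(\mord_{2}(Y) = 0\) (otherwise the bottom digit of \(\mphi[2](Y)\) would be \(1\), not \(0\)); the option that works is \((2,1)\), which changes the level. Appealing to "Nim reachability" of the target value \(v \ominus (\ybsp[\mord_{\ybs}(X) + 1] - 1)\) cannot repair this, because the correction term of the position you land on depends on that position --- which is exactly the circularity you flag in your other case.

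That circularity is the crux of the whole proof and has to be resolved, not merely noted; it is present in both of your cases. The paper resolves it with a "solution formula" (Lemma \ref{orgtarget31}): fixing all components but one, it sets \(M := \mord_{\ybs}\big((v \ominus (-1), x^1, \ldots, x^{\numcoins - 1})\big)\), i.e.\ it treats \(v \oplus 1\) as a phantom component, defines \(y^0 := v \ominus (\ybsp[M + 1] - 1) \ominus x^1 \ominus \cdots \ominus x^{\numcoins - 1}\), and proves that then \(\mord_{\ybs}(Y) = M\) automatically, hence \(\mphi[\ybs](Y) = v\). The remaining (and substantial) work is to choose the modified components so that \(Y\) is componentwise dominated by \(X\), nonzero, and \(X - Y \in \sgCord[\ybs]\); there the paper's case split is the reverse of yours: a single-component move suffices precisely when \(\ell < \mord_{\ybs}(X)\), and the genuinely hard case is \(\ell \ge \mord_{\ybs}(X)\), where a second combinatorial lemma (Lemma \ref{orgtarget33}) is needed to spread the digit-\(\ell\) decrease over several components --- this is where the multi-component moves of \(\sgCord[\ybs]\) are unavoidable, as the paper's weight lower bound (A2) shows. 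As it stands, your proposal establishes only the "no option has the same \(\mphi[\ybs]\)-value" half; the surjectivity-below-\(\mphi[\ybs](X)\) half is missing, and the sketched route to it would not go through.
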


\comment{connect}
\label{sec:orgheadline18}
Before giving an example of Theorem \ref{orgtarget16}, we introduce some notation. 
For \(\sgX \Tin \Nb\) and \(L \Tin \NN\), let
\(\sgX_{<L} \Tbe [\sgX_0, \sgX_1, \ldots, \sgX_{L - 1}] \in \Nb\), that is, \(\sgX_{<L} \Teq \sgX \bmod \ybsp[L]\).
For \(X \Tin \Nb^\numcoins\), let \(X_{<L} \Tbe (x^0_{<L}, x^1_{<L} \ldots, x^{\numcoins - 1}_{<L}) \in \Nb^\numcoins\).
When \(X \Teq X_{<L}\),
it is convenient to write \(X\) as follows:
\[
 X = \begin{bmatrix}
 x^0_0 & x^0_1 & \cdots & x^0_{L - 1}  \\
 \vdots & \vdots & \ddots & \vdots \\
 x^{\numcoins - 1}_0 & x^{\numcoins - 1}_1 & \cdots & x^{\numcoins - 1}_{L - 1} \\
 \end{bmatrix}.
\]
Note that if \(\mord_{\ybs}(X) \Teq \cMX\), then
\[
 X \Teq \begin{bmatrix}
  &  &   x^0_{\cMX} & \cdots & x^0_{L - 1}\\
  & \bigzero  &  \vdots & \cdots & \vdots \\
  &  &   x^{\numcoins - 1}_{\cMX} & \cdots & x^{\numcoins - 1}_{L - 1}
 \end{bmatrix}.
\]

 \begin{example}
 \comment{Exm. 2-misere:2}
\label{sec:orgheadline19}
\label{orgtarget17}
Let us consider Example \ref{orgtarget8} again.
Let
\[
 X \Tbe (2,2) = \begin{bmatrix}
 0 & 1 \\
 0 & 1
 \end{bmatrix} \in \Nb[2]^2 \quad \tand \quad
 Y = (2,3) = \begin{bmatrix}
 0 & 1 \\
 1 & 1
 \end{bmatrix} \in \Nb[2]^2.
\]
Then \(\mord_{2}(X) \Teq 1\) and \(\mord_{2}(Y) \Teq 0\), so
\[
 \mphi[2](X) \Teq \nsigma[2](X) \oplus (2^{1 + 1} - 1) = 3
\quad \tand \quad
 \mphi[2](Y) = \nsigma[2](Y) \oplus (2^{0 + 1} - 1) = 0.
\]
 
\end{example}

\subsection{The weight of \texorpdfstring{$\mphi[\ybs, \numcoins]$}{phi(beta, k)}}
\label{sec:orgheadline21}
\label{orgtarget18}
We give the minimum of the \emph{weight} of \(\sgC\) such that
\(\Gamma(\cPmis, \sgC)\) is a \(\ybs\)-saturation of mis\`{e}re Nim.

Let \(\cP \Tbesubseteq \NN^\numcoins\).
For a nonnegative integer-valued function \(\psi : \cP \to \NN\),
let \(\Delta(\psi)\) be the set of \(\sgC \subseteq \NN^\numcoins \setminus \set{(0,\ldots,0)}\) 
such that the Sprague-Grundy function of \(\Gamma(\cP, \sgC)\) equals \(\psi\).
Note that if \(\sgC, \sgD \Tisin \Delta(\psi)\) and \(\sgC \Tissubseteq \sgE \subseteq \sgD\), then \(\sgE \Tisin \Delta(\psi)\).
By definition, \(\Gamma(\cP, \sgC)\) is \(\ybs\)-saturated if and only if
\(\sgC \Tisin \Delta(\psi^{\ybs})\), where \(\psi^{\ybs}\) is the Sprague-Grundy function of \(\Gamma(\cP, \sgCord[\ybs])\).
If \(\Delta(\psi) \Tneq \emptyset\), then define
\[
 \wt(\psi) \Ttobe \min_{\sgC \in \Delta(\psi)} \wt(\sgC), 
\]
where \(\wt(\sgC) \Teq \max \set{\wt(C) : C \in \sgC}\) and \(\max \emptyset = 0\).
For example, if \(\psi^2\) is the Sprague-Grundy function of a 2-saturation of Nim or that of a 2-saturation of Welter's game,
then 
\[
 \wt(\psi^2) \Teq 1
\] 
since \(\sgC_{[1]} \Tisin \Delta(\psi^2)\). In other words, Nim and Welter's game themselves are 2-saturated.
However, as we have seen in Example \ref{orgtarget8}, if \(\numcoins \Teq 2\), then \(\sgC_{[1]} \Tisnotin \Delta(\mphi[2,2])\), 
so \(\wt(\mphi[2,2]) \Teq 2\). In fact, if \(\numcoins \Tisge 2\), then
\[
 \wt(\mphi[2,\numcoins]) \Teq 2.
\]
Let \(\ccB\) be the supremum of \(\set{\ybs[L] : L \ge 1}\) in \(\NN \cup \set{\infty}\). 
In general, we will prove that
\begin{eqnarray}
\label{orgtarget19}
 \wt(\mphi[\ybs, \numcoins]) &\Teq& \max \bigg\{\min \Big\{\ybs[L] - \delta(L), \numcoins - \delta(L)[\ybs[0] < 2\numcoins] \Big\} : L \in \NN \bigg\} \\
 &=& \begin{cases}
 \numcoins & \tif \ccB \ge \numcoins \ \  \tor \ \ \ \ybs[0] \ge 2\numcoins, \\
 \numcoins - 1 & \tif \ccB < \numcoins \ \tand \ \numcoins \le \ybs[0] < 2\numcoins, \\
 \max \set{\ybs[0] - 1, \ccB}  & \tif \ccB < \numcoins \ \tand \ \ybs[0] < \numcoins,
 \end{cases} \nonumber
\end{eqnarray}
where \(\delta(L) \Teq [L = 0]\) and \([\ ]\) is the Iverson bracket notation, that is, \([P] \Teq 1\) if a statement \(P\) holds, and \([P] = 0\) otherwise.
In particular, if \(\ybs \Teq b\) for some \(b \Tin \NN\), then
\[
 \wt(\mphi[b, \numcoins]) \Teq \min \set{b, \numcoins}.
\]

\section{Proofs}
\label{sec:orgheadline53}
When no confusion can arise,
we write \(\nsigma\) and \(\mphi\) instead of
\(\nsigma[\ybs]\) and \(\mphi[\ybs]\), respectively.

\subsection{Preliminaries}
\label{sec:orgheadline23}
\label{orgtarget20}
Let \(\sgC \Tbesubseteq \Nb^\numcoins \setminus \set{(0, \ldots, 0)}\).
The Sprague-Grundy function of \(\Gamma(\cPmis, \sgC)\) equals \(\mphi\) if and only if \(\sgC\) satisfies the following two conditions:
\begin{description}
\item[{(SG1)}] \label{orgtarget21} If \(X \Tisin \cPmis\), then \(X\) has no option \(Y\) with \(\mphi(Y) = \mphi(X)\) in \(\Gamma(\cPmis, \sgC)\), that is, \(\mphi(X - C) \Tneq \mphi(X)\) for every \(C \Tin \sgC\) with \(X - C \Tin \cPmis\).
\end{description}
\phantomsection
\begin{description}
\item[{(SG2)}] \label{orgtarget22} If \(X \Tisin \cPmis\) and \(0 \Tisle \sgY < \mphi(X)\), then \(X\) has an option \(Y\) with \(\mphi(Y) =  \sgY\) in \(\Gamma(\cPmis, \sgC)\), that is, \(\mphi(X - C) \Teq \sgY\) for some \(C \Tin \sgC\) with \(X - C \Tin \cPmis\).
\end{description}

To prove Theorem \ref{orgtarget16} and (\ref{orgtarget19}), it therefore suffices to show the following three assertions:
\begin{description}
\item[{(A1)}] \(\sgCord[\ybs]\) satisfies (\hyperref[orgtarget21]{SG1}).
\item[{(A2)}] \(\set{C \in \Nb^\numcoins : \wt(C) < w}\) does not satisfy (\hyperref[orgtarget22]{SG2}), where \(w\) is the right-hand side of (\ref{orgtarget19}).
\item[{(A3)}] \(\sgCord[\ybs][[w]]\) satisfies (\hyperref[orgtarget22]{SG2}), where \(\sgCord[\ybs][[w]] \Teq \set{C \in \sgCord[\ybs] : \wt(C) \le w}\).
\end{description}

\subsection{Proof of (A1)}
\label{sec:orgheadline28}
\label{orgtarget23}

\comment{Goal}
\label{sec:orgheadline24}

Let \(X \Tbein \cPmis\), \(C \Tbein \sgCord[\ybs]\) with \(X - C \Tin \cPmis\), and \(Y = X - C\).
Let \(\cMX = \mord_{\ybs}(X), \cMY = \mord_{\ybs}(Y)\), and \(\cMC = \mord_{\ybs}(C)\).
We show that \(\mphi<\cMC>(Y) \Tneq \mphi<\cMC>(X)\),
where \(\mphi<\cMC>(X) \Teq (\mphi(X))_\cMC\).
Since \((\ybs^{\cMX + 1} - 1)_L \Teq \ominus \Compare{L}{\cMX}\) for \(L \Tin \NN\), it follows that
\begin{equation}
\label{orgtarget24}
 \mphi<L>(X) \Teq \nsigma<L>(X) \ominus \Compare{L}{\cMX} \quad \tand \quad \mphi<L>(Y) = \nsigma<L>(Y) \ominus \Compare{L}{\cMY}.
\end{equation}

\comment{[L \(\le\) M(X)] = [L \(\le\) M(Y)]}
\label{sec:orgheadline25}
We first show that
\begin{equation}
\label{orgtarget25}
\Compare{\cMC}{\cMX} \Teq \Compare{\cMC}{\cMY}.
\end{equation}
Since \(C_{<\cMC} \Teq (0,\ldots, 0)\), we see that \(Y_{<\cMC} \Teq (X - C)_{< \cMC} = X_{<\cMC}\).
Suppose that \(\cMC \Tisgt \cMX\). Then \(Y_{\le \cMX} \Teq X_{\le \cMX} \neq (0, \ldots, 0)\), where \(Y_{\le \cMX} \Teq Y_{< \cMX + 1}\).
Hence \(\cMY \Teq \cMX\), so (\ref{orgtarget25}) holds.
If \(\cMC \Tisle \cMX\), then \(Y_{<\cMC} \Teq X_{<\cMC} = (0, \ldots, 0)\),  so \(\cMC \Tisle \cMY\). Therefore (\ref{orgtarget25}) holds.

\comment{\nsigma<\(cMC\)>(X) \(\neq\)\nsigma<\(cMC\)>(Y)}
\label{sec:orgheadline26}
We next show that
\begin{equation}
\label{orgtarget26}
\nsigma<\cMC>(X) \Tneq \nsigma<\cMC>(Y).
\end{equation}
Since \(C_{<\cMC} \Teq (0, \ldots, 0)\), it follows that \(y^i_\cMC \Teq x^i_\cMC \ominus c^i_\cMC\) for \(i \in \Omega\). Hence
\[
 \nsigma<\cMC>(Y) \Teq \nsigma<\cMC>(X) \ominus \nsigma<\cMC>(C).
\]
Recall that \(\cMC \Teq \mord_{\ybs}(C) = \ord_{\ybs} (\sum_i c^i)\) since \(C \Tisin \sgCord[\ybs]\).
This implies that \((\sum_i c^i)_\cMC \Teq \nsigma<\cMC>(C) \neq 0\). Thus (\ref{orgtarget26}) holds.

\comment{Conclusion}
\label{sec:orgheadline27}
Combining (\ref{orgtarget24})--(\ref{orgtarget26}), we see that \(\mphi<\cMC>(X) \Tneq \mphi<\cMC>(Y)\).
Therefore \(\sgCord[\ybs]\) satisfies (\hyperref[orgtarget21]{SG1}).
\qed

\subsection{Proof of (A2)}
\label{sec:orgheadline33}
\label{orgtarget27}
\comment{Goal}
\label{sec:orgheadline31}
A position \(Y \Tin \cPmis\) is called a \emph{descendant} of a position \(X \Tin \cPmis\) if \(X - Y \Tisin \Nb^\numcoins\).

If \(\numcoins \Teq 1\), then \(w \Teq 1\), so (A2) is obvious.
Suppose that \(\numcoins \Tisge 2\).
It suffices to show that there exist \(X \Tin \cPmis\) and \(\sgY\) with \(0 \le \sgY < \mphi(X)\) satisfying the following condition:
if \(Y\) is a descendant of \(X\) with \(\mphi(Y) = \sgY\), then \(\wt(X - Y) \Tisge w\).
By (\ref{orgtarget19}),
\[
 w  \Teq \min \Big\{\ybs[\cMX] - \delta(\cMX), \numcoins - \delta(\cMX)[\ybs[0] < 2\numcoins]\Big\} \tforsome \cMX \in \NN.
\]
Note that \(w \Tisge 1\). We divide the proof into two cases.

\resetmycase
 
 \begin{mycase}[\(\cMX > 0\) or $\ybs_0 < 2\numcoins$]
 \comment{Case. [\(\cMX > 0\) or $\ybs_0 < 2\numcoins$]}
\label{sec:orgheadline29}
We see that \(w  \Teq \min \set{\ybs[\cMX], \numcoins} - \delta(\cMX)\).
Let
\[
 X \Tbe (\underbrace{\ybsp[\cMX], \ldots, \ybsp[\cMX]}_{w + \delta(\cMX)}, 0,\ldots, 0) \in \cPmis^\numcoins.
\]
Since \(w + \delta(\cMX) \Tisle \ybs[\cMX]\) and \(\mord_{\ybs}(X) \Teq \cMX\), it follows that
\begin{align*}
 \mphi(X) &\Teq \nsigma(X) \oplus (\ybsp[\cMX + 1] - 1) = [\overbrace{0,\ldots, 0}^{\cMX}, \underbrace{1 \oplus \cdots \oplus 1}_{w + \delta(\cMX)}] \ominus [\overbrace{1, \ldots, 1}^\cMX, 1] \\
 &=  \begin{cases}
  w \ybsp[\cMX] -  1 & \tif \cMX > 0, \\
 w & \tif \cMX = 0.
 \end{cases}
\end{align*}
In particular, \(\mphi(X) \Tisgt 0\).
Let \(Y\) be a descendant of \(X\) with \(\mphi(Y) = 0\).
To prove that \(\wt(X - Y) \Teq w\),
we show that \(\sum_i y^i_{\cMX} \Teq \delta(\cMX)\).
Since \(\mphi<0>(Y) \Teq 0\), we see that \(\mord_{\ybs}(Y) \Teq 0\).
Hence \(\mphi<\cMX>(Y) \Teq \nsigma<\cMX>(Y) \ominus \Compare{\cMX}{0} = \nsigma<\cMX>(Y) \ominus \delta(\cMX)\). 
Since \(\mphi<\cMX>(Y) \Teq 0\),
\begin{equation}
\label{orgtarget28}
 \nsigma<\cMX>(Y) \Teq \delta(\cMX).
\end{equation}
We also see that \(\sum_i y^i_{\cMX} \Tislt \sum_i x^i_\cMX = w + \delta(\cMX) \le \ybs[\cMX]\) because \(Y\) is a descendant of \(X\) with \(Y \neq X\).
Hence \(\sum_i y^i_{\cMX} \Teq \nsigma<\cMX>(Y) = \delta(\cMX)\).
Therefore \(\wt(X - Y) \Teq w\).
 
\end{mycase}

 \begin{mycase}[\(\cMX = 0\) and $\ybs_0 \ge 2\numcoins$]
 \comment{Case. [\(\cMX = 0\) and $\ybs_0 \ge 2\numcoins$]}
\label{sec:orgheadline30}
Since \(\numcoins \ge 2\),  we see that \(\ybs[0] \Tisge 4\) and \(w \Teq \min \set{\ybs[0] - 1, \numcoins} = \numcoins\).
Let
\[
 X \Tbe (2, \ldots, 2) \in \cPmis^\numcoins.
\]
Then \(\mphi(X) \Teq \mphi<0>(X) = 2\numcoins - 1 > 0\).
Let \(Y\) be a descendant of \(X\) with \(\mphi(Y) = 0\).
Then \(\mphi<0>(Y) \Teq \nsigma<0>(Y) \ominus 1 = 0\).
Since \(\sum_i y_0^i \Tislt \sum_i x_0^i = 2\numcoins \le \ybs[0]\), it follows that \(\sum_i y_0^i \Teq \nsigma<0>(Y) = 1\). 
This implies that \(y^i \Tisin \set{0, 1}\) for \(i \Tin \Omega\). Hence \(\wt(X - Y) \Teq \numcoins = w\).
\qed
 
\end{mycase} 

 \begin{example}
 \comment{Exm. A2}
\label{sec:orgheadline32}
\label{orgtarget29}
Let \(\ybs \Tbe (6, 2, 2, \ldots)\), \(\numcoins \Tbe 3\), and \(X \Tbe (2,2,2)\).
Then \(\mphi(X) \Teq 5\).
If \(Y\) is a descendant of \(X\) with \(\mphi(Y) = 0\), then
\(Y \Tisin \set{(0,0,1), (0,1,0), (1,0,0)}\), and hence \(\wt(X - Y) \Teq 3\).
Note that if \(\ybs \Teq (5,2,2, \ldots)\), then \(\mphi((2,2,2)) \Teq 0\).
 
\end{example}

\subsection{Proof of (A3)}
\label{sec:orgheadline52}
\label{orgtarget30}
To prove (A3), we present two lemmas.

\comment{Solution formula}
\label{sec:orgheadline34}
For \(X \Tin \cPmis\), the next lemma allows us to express \(x^0\) with \(x^1, \ldots, x^{\numcoins - 1}\), and \(\mphi(X)\).

 \begin{lemma}
 \comment{Lem. solution formula}
\label{sec:orgheadline35}
\label{orgtarget31}
Let \(X \Tbein \cPmis\) and \(\sgY \Tbein \Nb\).
For \(i \Tin \Omega\), let
\[
 \cMY^{(i)} \Tbe \mord_{\ybs}\left((\sgY \ominus (-1), x^0, \ldots, x^{i - 1}, x^{i + 1}, \ldots, x^{\numcoins - 1})\right),
\]
\[
 y^{(i)} \Tbe \sgY \ominus (\ybsp[\cMY^{(i)} + 1] - 1) \ominus x^0 \ominus \cdots \ominus x^{i - 1} \ominus x^{i + 1} \ominus \cdots \ominus x^{\numcoins - 1},
\]
and
\[
 Y^{(i)} \Tbe (x^0, \ldots, x^{i - 1}, y^{(i)}, x^{i + 1}, \ldots, x^{\numcoins - 1}).
\]
Then \(\mord_{\ybs}(Y^{(i)}) \Teq \cMY^{(i)}\). In particular, \(\phi(Y^{(i)}) \Teq \sgY\).
 
\end{lemma}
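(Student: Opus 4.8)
The plan is to separate the statement into a purely formal reduction, which shows that $\mphi(Y^{(i)}) = \sgY$ is immediate once $\mord_{\ybs}(Y^{(i)}) = \cMY^{(i)}$ is known, and the genuine content, which is the order identity $\mord_{\ybs}(Y^{(i)}) = \cMY^{(i)}$ itself.

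\emph{Reduction.} Assume $\mord_{\ybs}(Y^{(i)}) = \cMY^{(i)}$. Since $\Zadic = \prod_{L} \range{\ybs[L]}$ is an abelian group under $\oplus$ with subtraction $\ominus$, I would substitute this order into $\mphi(Y^{(i)}) = \nsigma(Y^{(i)}) \oplus (\ybsp[\mord_{\ybs}(Y^{(i)}) + 1] - 1)$, expand $\nsigma(Y^{(i)}) = y^{(i)} \oplus \bigoplus_{j \neq i} x^j$, and insert the definition of $y^{(i)}$. Every $x^j$ with $j \neq i$ and the term $\ybsp[\cMY^{(i)} + 1] - 1$ cancel against their $\ominus$-counterparts in $y^{(i)}$, leaving exactly $\sgY$. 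This is pure group arithmetic and should require no case analysis.

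\emph{Order computation.} Put $M := \cMY^{(i)}$, $Z := \sgY \ominus (-1)$, and $s := \min\set{\ord_{\ybs}(x^j) : j \neq i}$ (with $s = \infty$ when $\numcoins = 1$). By definition $M = \min(\ord_{\ybs}(Z), s) \le s$, and $M < \infty$ because $\sgY \in \Nb$ has only finitely many nonzero digits, so $Z_m = 1 \neq 0$ for all large $m$. As $Y^{(i)}$ coincides with $X$ off coordinate $i$, we have $\mord_{\ybs}(Y^{(i)}) = \min(\ord_{\ybs}(y^{(i)}), s)$, so it suffices to show this minimum equals $M$. I would compute $y^{(i)}$ digitwise: the digits of $\ybsp[M + 1] - 1$ are $\ybs[m] - 1$ for $m \le M$ and $0$ otherwise, which gives $y^{(i)}_m = (\sgY_m \oplus [m \le M]) \ominus \bigoplus_{j \neq i} x^j_m$. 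For $m < M$, since $m < \ord_{\ybs}(Z)$ we have $\sgY_m = \ybs[m] - 1$, hence $\sgY_m \oplus 1 = 0$; and since $m < s$ each $x^j_m = 0$; so $y^{(i)}_m = 0$, giving $\ord_{\ybs}(y^{(i)}) \ge M$ and settling the case $s = M$. When $s > M$ we have $M = \ord_{\ybs}(Z)$, and at $m = M$ the terms $x^j_M$ vanish while $\sgY_M \oplus 1 = Z_M \neq 0$, so $\ord_{\ybs}(y^{(i)}) = M < s$; again the minimum is $M$.

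The only delicate point, and the step I would write most carefully, is recognizing the role of $Z = \sgY \ominus (-1)$ as the digitwise increment and tracking how the maximal-digit prefix of $\sgY$ conspires with the $\ybs[m] - 1$ digits of $\ybsp[M + 1] - 1$ to annihilate $y^{(i)}_m$ for $m < M$, while $\ord_{\ybs}(Z) = M$ keeps $y^{(i)}_M$ nonzero once $s > M$. Correctly separating the cases $s = M$ and $s > M$ is exactly what pins $\mord_{\ybs}(Y^{(i)})$ at $M$ rather than merely bounding it below.
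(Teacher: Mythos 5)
Your proof is correct and follows essentially the same route as the paper's: the digitwise computation showing \(y^{(i)}_m = 0\) for \(m < \cMY^{(i)}\), the case split on whether some other coordinate attains the minimum order (\(s = M\)) or else \((\sgY \ominus (-1))_M \neq 0\) forces \(y^{(i)}_M \neq 0\), and the final cancellation giving \(\mphi(Y^{(i)}) = \sgY\) all match the paper's argument (which simply reduces to \(i = 0\) first). Your explicit remarks that \(M < \infty\) and that the case \(\numcoins = 1\) corresponds to \(s = \infty\) are minor additions the paper leaves implicit.
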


\begin{proof}
 \comment{Proof.}
\label{sec:orgheadline36}
It suffices to prove the lemma when \(i \Teq 0\).
Let \(\cMY \Tbe \cMY^{(0)}\) and \(Y \Tbe Y^{(0)}\). We show that \(\mord_{\ybs}(Y) \Teq \cMY\).
For \(L \Tlt \cMY\),
\begin{align*}
 y^0_L  &\Teq \sgY_L \ominus (\ybsp[\cMY + 1] - 1)_L \ominus x^1_L \ominus \cdots \ominus x^{\numcoins - 1}_L \\
 &= \sgY_L \ominus (\ominus 1) = (\sgY \ominus (-1))_L = 0.
\end{align*}
This implies that \(\mord_{\ybs}(Y) \Tisge \cMY\).
By the definition of \(\cMY\), we see that \((\sgY \ominus (-1))_{\cMY} \Tneq 0\) or \(x^\ccj_{\cMY} \neq 0\) for some \(\ccj \ge 1\).
If the latter holds, then \(\mord_{\ybs}(Y) \Teq \cMY\). Suppose that \((\sgY \ominus (-1))_{\cMY} \Tneq 0\) and \(x^\ccj_{\cMY} \Teq 0\) for every \(\ccj \ge 1\).
Then \(y^0_{\cMY} \Teq \sgY_\cMY \ominus (\ybsp[\cMY + 1] - 1)_{\cMY} \ominus x^1_\cMY \ominus \cdots \ominus x^{\numcoins - 1}_\cMY = (\sgY \ominus (-1))_{\cMY} \neq 0\), so \(\mord_{\ybs}(Y) = \cMY\). 
Therefore 
\[
 \sgY \Teq y^0 \oplus x^1 \oplus \cdots \oplus x^{\numcoins - 1} \oplus (\ybsp[\cMY + 1] - 1) = \nsigma[\ybs](Y) \oplus (\ybsp[\mord_{\ybs}(Y) + 1] - 1) = \mphi(Y).
\]
\end{proof}

 \begin{example}
 \comment{Exm.}
\label{sec:orgheadline37}
\label{orgtarget32}
Let \(\ybs \Tbe 3\), \(X \Tbe (3, 4)\), and \(\sgY \Tbe 2\).
Note that \(\sgY \ominus (-1) \Teq [0,1,1,\ldots]\) and \(\ord_3(\sgY \ominus (-1)) \Teq 1\).
Since
\[
 \cMY^{(0)} \Teq \mord_{3}\big((\sgY \ominus (-1), 4)\big) = 0 \tand \cMY^{(1)} = \mord_{3}\big((\sgY \ominus (-1), 3)\big) = 1,
\]
it follows that
\[
 y^{(0)} \Teq \sgY \ominus (3^1 - 1) \ominus 4 = 8 \tand y^{(1)} = \sgY \ominus (3^2 - 1) \ominus 3 = 0.
\]
By Lemma \ref{orgtarget31}, \(\mphi((y^{(0)}, x^1)) \Teq \mphi((x^0, y^{(1)})) = \sgY = 2\). Indeed,
\[
 \mphi((8,4)) \Teq 8 \oplus 4 \oplus (3^1 - 1) = 2 \tand \mphi((3,0)) = 3 \oplus 0 \oplus (3^2 - 1) = 2.
\]
 
\end{example}

\comment{Connect}
\label{sec:orgheadline38}
The following trivial lemma will be used to construct appropriate options.
For \(X \Tin \Nb^\numcoins\) and \(L \Tin \NN\), let \(x_L \Tbe (x^0_L, x^1_L, \ldots, x^{\numcoins - 1}_L) \in \range{\ybs[L]}^\numcoins\).
For example, if \(\ybs = 3\) and \(X = (\bnums{1,0,2}, \bnums{2,1})\), then \(x_0 = (1,2)\) and \(x_1 = (0, 1)\).

 \begin{lemma}
 \comment{Lem. u}
\label{sec:orgheadline39}
\label{orgtarget33}
Let \(X \Tbein \Nb^\numcoins\), \(\sgY \Tbein \Nb\), and \(\ccR \Tbein \NN\).
Choose \(j \Tin \Omega\) so that \(x^j_\ccR \Tisge x^i_\ccR\) for every \(i \Tin \Omega\).
If \(\sgY_\ccR \Tisle \sum_{i} x^i_\ccR\), then
there exists \(u \Tin \range{\ybs[\ccR]}^\numcoins\) satisfying the following three conditions:
\begin{description}
\item[{(1)}] \(\displaystyle \bigoplus_{i \in \Omega} u^i \Teq \sgY_\ccR\), where \(u = (u^0, \ldots, u^{\numcoins - 1})\).
\item[{(2)}] \(0 \Tisle x_\ccR^i - \ccu^i \le x_\ccR^j - \ccu^j\) for every \(i \in \Omega\).
\item[{(3)}] \(\displaystyle \sum_{i \in \Omega} (x_\ccR^i - u^i) \Tisle \ybs[\ccR] - 1\).
\end{description}
 
\end{lemma}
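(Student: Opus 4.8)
The plan is to pass to the differences $d^i = x^i_\ccR - u^i$ and reduce everything to a single total. Write $S = \sum_{i \in \Omega} x^i_\ccR$. Since $\oplus$ is addition modulo $\ybs[\ccR]$ and each $u^i$ will lie in $\range{\ybs[\ccR]}$, condition (1) amounts to $\left(\sum_{i \in \Omega} u^i\right) \bmod \ybs[\ccR] = \sgY_\ccR$, that is, $\sum_{i \in \Omega} d^i \equiv S - \sgY_\ccR \pmod{\ybs[\ccR]}$. I would therefore fix the target total $D = (S - \sgY_\ccR) \bmod \ybs[\ccR]$, which lies in $\{0, 1, \ldots, \ybs[\ccR] - 1\}$, and construct nonnegative integers $(d^i)_{i \in \Omega}$ with $\sum_{i \in \Omega} d^i = D$, $d^i \le x^i_\ccR$, and $d^i \le d^j$ for every $i$. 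Setting $u^i = x^i_\ccR - d^i$ then finishes the job: from $0 \le d^i \le x^i_\ccR \le \ybs[\ccR] - 1$ we get $u \in \range{\ybs[\ccR]}^\numcoins$; condition (2) is exactly $0 \le d^i \le d^j$; condition (3) is $\sum_{i \in \Omega} d^i = D \le \ybs[\ccR] - 1$; and $\bigoplus_{i \in \Omega} u^i = (S - D) \bmod \ybs[\ccR] = \sgY_\ccR$ by the choice of $D$, giving condition (1).

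The one quantitative point to check is that the target total does not exceed the available capacity, i.e. $D \le S$; this is where the hypothesis $\sgY_\ccR \le S$ enters. If $S \le \ybs[\ccR] - 1$, then $0 \le S - \sgY_\ccR \le S \le \ybs[\ccR] - 1$, so $D = S - \sgY_\ccR \le S$. If instead $S \ge \ybs[\ccR]$, then $D \le \ybs[\ccR] - 1 < S$. Hence $0 \le D \le S$ in either case, and in particular $S - D \ge 0$, so the residue computation above is legitimate.

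It then remains to realise a prescribed total $D \in \{0, 1, \ldots, S\}$ under the constraints $0 \le d^i \le x^i_\ccR$ and $d^i \le d^j$, and I would do this by adding units one at a time starting from $d^i = 0$ for all $i$. As long as the current total is strictly less than $S$, some index has $d^i < x^i_\ccR$; if $d^j < x^j_\ccR$, increment $d^j$, and otherwise choose any $i$ with $d^i < x^i_\ccR$, which by the maximality $x^i_\ccR \le x^j_\ccR = d^j$ satisfies $d^i < d^j$, and increment that $d^i$. Each step raises the total by exactly $1$ while preserving $0 \le d^i \le x^i_\ccR$ and $d^i \le d^j$, so every value in $\{0, 1, \ldots, S\}$ is attained by some admissible configuration; stopping at total $D$ yields the desired $d$. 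Since the statement is elementary (indeed labelled trivial), there is no serious obstacle: the only two things needing attention are the capacity bound $D \le S$ above and the fact that the tallest column $j$ can always absorb the next unit unless it is already full, which is precisely what keeps the majorization constraint $d^i \le d^j$ intact throughout the incremental construction.
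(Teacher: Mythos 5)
Your proof is correct and takes essentially the same approach as the paper's: both pass to the differences $d^i = x^i_\ccR - u^i$, fix the target total $d = \bigl(\sum_{i} x^i_\ccR - \sgY_\ccR\bigr) \bmod \ybs[\ccR]$, use the hypothesis $\sgY_\ccR \le \sum_{i} x^i_\ccR$ to verify $d \le \sum_{i} x^i_\ccR$, and then distribute $d$ among the components subject to the capacity and majorization constraints. The paper performs this distribution in one shot via the greedy formula $t^i = \min\{x^i_\ccR,\ d - \sum_{h<i} t^h\}$ (filling the largest component first) rather than your unit-by-unit increments, but this is the same construction.
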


\comment{connect}
\label{sec:orgheadline40}
Before proving Lemma \ref{orgtarget33}, let us give an example.
Let \(\ybs \Tbe (7,2,\ldots)\), \(X \Tbe (4, 4, 3)\), \(\ccR \Tbe j = 0\), and \(\sgY \Tbe 6\).
Then \(\sgY_0 \Teq 6 < 11 = \sum_i x_0^i\).
Since \((\bigoplus_i x_0^i) \ominus \sgY_0 \Teq 4 \ominus 6 = 5 < 11\), 
we can obtain \(u\) satisfying (1)--(3) by subtracting a vector \((t^0, t^1, t^2)\) with \(\sum t^i = 5\) from \(x_0\).
For example, let \(t \Tbe (4, 1, 0)\); then \(x_0 - t \Teq (0, 3, 3)\), and it satisfies (1)--(3).

\begin{proof}
 \comment{Proof.}
\label{sec:orgheadline41}
By rearranging \(x^0, \ldots, x^{\numcoins - 1}\) if necessary, we may assume that \(j \Teq 0\).
Let \(d \Tbe (\bigoplus_i x_\ccR^i) \ominus \sgY_\ccR\). Since \(\sgY_\ccR \Tisle \sum_{i} x^i_\ccR\), it follows that \(d \Tisle \sum_i x_\ccR^i - \sgY_\ccR \le \sum_i x_\ccR^i\).
For \(i \Tin \Omega\), let
\[
 t^{i} \Tbe \min \Set{x_\ccR^i, d - \sum_{h = 0}^{i - 1}  t^h}.
\]
Then \(\sum_i t^i \Teq d\).
It follows that \(x_\ccR - (t^0, \ldots, t^{\numcoins - 1})\) satisfies (1)--(3).
\end{proof}

\comment{Proof of A3}
\label{sec:orgheadline50}
We now prove (A3).
Let \(\sgX \Tbe \mphi(X)\) and \(\sgY \Tbein \Nb\) with \(0 \le \sgY < \sgX\).
We show that \(\mphi(X - C) \Teq \sgY\) for some \(C \Tin \sgCord[\ybs][[w]]\) with \(X - C \in \cPmis\).
Let \(\cMX \Tbe \mord_{\ybs}(X)\) and
\[
 \cMDiff \Tbe \max \set{L \in \NN : \sgY_L \neq \sgX_L}.
\]
We divide the proof into two cases.

\resetmycase
 
 \begin{mycase}[\(\cMX > \cMDiff\)]
 \comment{Case. [\(\cMX > \cMDiff\)]}
\label{sec:orgheadline42}
By rearranging \(x^0, \ldots, x^{\numcoins - 1}\) if necessary, we may assume that \(x^0_\cMX \Tneq 0\).
For \(i \Tge 1\), let \(y^i \Tbe x^i\).
Let
\[
 \cMY \Tbe \mord_{\ybs}((\sgY \ominus (-1), y^1, \ldots, y^{\numcoins - 1})),
\]
\[
 y^0 \Tbe \sgY \ominus (\ybsp[\cMY + 1]  - 1) \ominus y^1 \ominus \cdots \ominus y^{\numcoins - 1},
\]
and \(Y = (y^0, y^1, \ldots, y^{\numcoins - 1})\).
By Lemma \ref{orgtarget31}, \(\mphi(Y) \Teq \sgY\) and \(\mord_{\ybs}(Y) = \cMY\).
It remains to prove that \(y^0 \Tislt x^0\).
We show that \(y^0_\cMX \Teq x^0_\cMX - 1\) and \(y^0_{\ge \cMX + 1} \Teq x^0_{\ge \cMX + 1}\).
Since \(\sgY_\cMDiff \Tislt \sgX_\cMDiff \le \ybs[\cMDiff] - 1\),
we see that \((\sgY \ominus (-1))_\cMDiff \Teq \sgY_\cMDiff \oplus 1 \neq 0\). 
Hence \(\cMY \Tisle \cMDiff < \cMX\).
Since \(\mord_{\ybs}(Y) \Teq \cMY\),
\begin{equation}
\label{orgtarget34}
 \sgY_\cMX \Teq y^0_\cMX \oplus y^1_\cMX \oplus \cdots \oplus y^{\numcoins - 1}_\cMX \ominus \Compare{\cMX}{\cMY} = y^0_\cMX \oplus x^1_\cMX \oplus \cdots \oplus x^{\numcoins - 1}_\cMX.
\end{equation}
Moreover, since \(\cMX \Tisgt \cMDiff\),
\begin{equation}
\label{orgtarget35}
 \sgY_\cMX \Teq \sgX_\cMX = x^0_\cMX \oplus x^1_\cMX \oplus \cdots \oplus x^{\numcoins - 1}_\cMX \ominus \Compare{\cMX}{\cMX} = x^0_\cMX \oplus x^1_\cMX \oplus \cdots \oplus x^{\numcoins - 1}_\cMX \ominus 1.
\end{equation}
By (\ref{orgtarget34}) and (\ref{orgtarget35}), \(y^0_\cMX \Teq x^0_\cMX \ominus 1\).
Since \(x^0_\cMX \Tneq 0\), it follows that \(y^0_\cMX \Teq x^0_\cMX  \ominus 1 = x^0_\cMX - 1\).
Similarly, for \(L \Tge \cMX + 1\),
\[
 y^0_L \oplus x^1_L \oplus \cdots \oplus x^{\numcoins - 1}_L \ominus \Compare{L}{\cMY} \Teq \sgY_L = \sgX_L = x^0_L \oplus x^1_L \oplus \cdots \oplus x^{\numcoins - 1}_L \ominus \Compare{L}{\cMX}.
\]
Since \(\Compare{L}{\cMY} \Teq \Compare{L}{\cMX} = 0\), we see that \(y^0_L \Teq x^0_L\). 
Therefore \(y^0 \Tislt x^0\) and \(X - Y \Teq (x^0 - y^0, 0,\ldots, 0) \in \sgCord[\ybs][[w]]\).
 
\end{mycase}

 \begin{mycase}[\(\cMX \le \cMDiff\)]
 \comment{Case. [\(\cMX \le \cMDiff\)]}
\label{sec:orgheadline49}
By rearranging \(x^0, \ldots, x^{\numcoins - 1}\) if necessary, we may assume that \(x^0_{\cMDiff} \Tisge x^1_{\cMDiff} \ge \cdots \ge x^{\numcoins - 1}_{\cMDiff}\).
Let \(\cMY \Tbe \mord_{\ybs}((\sgY \ominus (-1), x^1, \ldots, x^{\numcoins - 1}))\). Since \(\sgY_\cMDiff \Tislt \sgX_\cMDiff \le \ybs[\cMDiff] - 1\), it follows that \(\cMY \Tisle \cMDiff\).

 \begin{myclaim}
 \comment{MyClaim. one-digit2}
\label{sec:orgheadline43}
\label{orgtarget36}
There exists \(\ccu \Tin \range{\ybs[\cMDiff]}^\numcoins\) satisfying the following four conditions:
\begin{description}
\item[{(C1)}] \(\displaystyle \bigoplus_{i \in \Omega} \ccu^i \Teq \sgY_{\cMDiff} \oplus \Compare{\cMDiff}{\cMY}\), where \(\ccu = (\ccu^0, \ldots, \ccu^{\numcoins - 1})\).
\item[{(C2)}] \(0 \Tisle x_{\cMDiff}^i - \ccu^i \le x_\cMDiff^0 - \ccu^0\) for every \(i \in \Omega\).
\item[{(C3)}] \(\ccu^0 \Tislt x^0_\cMDiff\) unless \(\cMX < \cMDiff = \cMY\).
\item[{(C4)}] \(\wt(x_\cMDiff - \ccu) \Tisle w\).
\end{description}
 
\end{myclaim} 

\comment{Construction of Y}
\label{sec:orgheadline44}
Assuming the claim for the moment, we construct \(Y\) with \(\mphi(Y) = \sgY\) and \(X - Y \in \sgCord[\ybs][[w]]\).
For \(i \Tge 1\), let
\begin{equation}
\label{orgtarget37}
 y^i \Tbe \bnums{x^i_0, \ldots, x^i_{\cMDiff - 1}, \ccu^i, x^i_{\cMDiff + 1}, x^i_{\cMDiff + 2}, \ldots} = x^i - (x^i_{\cMDiff} - u^i) \ybsp[\cMDiff] \text{\ (}\le x^i \text{)}.
\end{equation}
Then
\begin{equation}
\label{orgtarget38}
 \mord_{\ybs}((\sgY \ominus (-1), y^1, \ldots, y^{\numcoins - 1})) \Teq \cMY.
\end{equation}
Indeed, if \(\cMY \Tislt \cMDiff\), then (\ref{orgtarget38}) is obvious.
If \(\cMY \Teq \cMDiff\), then \((\sgY \ominus (-1))_\cMY  = (\sgY \ominus (-1))_\cMDiff \Tneq 0\), so (\ref{orgtarget38}) holds.
Let
\begin{equation}
\label{orgtarget39}
 y^0 \Tbe \sgY \ominus (\ybsp[\cMY + 1] - 1) \ominus y^1 \ominus \cdots \ominus y^{\numcoins - 1}
\end{equation}
and \(Y = (y^0, y^1, \ldots, y^{\numcoins - 1})\).
It follows from (\ref{orgtarget38}) and Lemma \ref{orgtarget31} that \(\phi(Y) \Teq \sgY\) and \(\mord_{\ybs}(Y) \Teq \cMY\).

\comment{Verification of C in \sgCord}
\label{sec:orgheadline45}
Let \(C \Tbe X - Y\).
We next show that \(C \Tisin \sgCord[\ybs][[w]]\), that is,
\begin{description}
\item[{(a)}] \(C \Tisin \Nb^\numcoins \setminus \set{(0,\ldots,0)}\),
\item[{(b)}] \(\wt(C) \Tisle w\), and
\item[{(c)}] \(\ord_{\ybs}(\sum c^i) \Teq \mord_{\ybs}(C)\).
\end{description}

(a) Since \(\phi(Y) \Teq \sgY \neq \sgX = \phi(X)\), it follows that \(C \Tneq (0, \ldots, 0)\). By (\ref{orgtarget37}), we see that \(y^i \Tisle x^i\) for \(i \Tge 1\).
To prove that \(y^0 \Tisle x^0\), we show that \(y^0_{\ge \cMDiff + 1} \Teq x^0_{\ge \cMDiff + 1}\) and \(y^0_\cMDiff \Teq u^0\). By (\ref{orgtarget39}), for \(L \Tgt \cMDiff (\ge \cMY, \cMX)\),
\[
 y^0_L \Teq \sgY_L \oplus \Compare{L}{\cMY} \ominus y^1_L \ominus \cdots \ominus y^{\numcoins - 1}_L =  \sgX_L \ominus  x^1_L \ominus \cdots \ominus x^{\numcoins - 1}_L = x^0_L \ominus [L \le \cMX] = x^0_L.
\]
We also see that
\[
 y^0_\cMDiff \Teq \sgY_\cMDiff \oplus \Compare{\cMDiff}{\cMY} \ominus y^1_\cMDiff \ominus \cdots \ominus y^{\numcoins - 1}_\cMDiff =  \sgY_\cMDiff \oplus \Compare{\cMDiff}{\cMY} \ominus u^1 \ominus \cdots \ominus u^{\numcoins - 1} = u^0
\]
by (C1).
Hence if \(u^0 \Tislt x^0_\cMDiff\), then \(y^0 \Tislt x^0\).
Suppose that \(u^0 \Teq x^0_\cMDiff\). Then \(\cMY \Teq \cMDiff\) by (C3).
Since \(\cMY \Teq \mord_{\ybs}(Y)\),
\[
 y^0 \Teq \bnums{\underbrace{0, \ldots, 0}_{\cMY}, \ccu^0, x^0_{\cMY + 1}, x^0_{\cMY + 2}, \ldots} = \bnums{\underbrace{0, \ldots, 0}_{\cMY}, x^0_\cMY, x^0_{\cMY + 1}, x^0_{\cMY + 2}, \ldots} \le x^0.
\]

(b) If \(\ccu^0 \Tneq x^0_\cMDiff\), then \(\wt(C) \Teq \wt(x_{\cMDiff} - \ccu)\), and hence \(\wt(C) \Tisle w\) by (C4).
Suppose that \(\ccu^0 \Teq x^0_\cMDiff\).
By (C2), \(\ccu \Teq x_\cMDiff\), so \(\wt(C) \Teq 1 \le w\).

(c) For \(i \Tge 1\), we know that \(c^i \Teq x^i - y^i = (x^i_\cMDiff - u^i) \ybsp[\cMDiff]\). 
Hence if \(\ord_{\ybs}(c^0) \Tislt \cMDiff\), then 
\[
 \mord_{\ybs}(C) \Teq \ord_{\ybs}(c^0) = \ord_{\ybs}\left(\sum_{i \in \Omega} c^i\right).
\]
Suppose that \(\ord_{\ybs}(c^0) \Tisge \cMDiff\). Then \(\cMDiff \Tisle \mord_{\ybs}(C) \le \ord_{\ybs}(\sum c^i)\),
so we need only show that \(\ord_{\ybs}(\sum c^i) \Teq \cMDiff\), that is, 
\begin{equation}
\label{orgtarget40}
 \bigoplus_{i \in \Omega} c^i_\cMDiff  \Tneq 0.
\end{equation}
We first show that \(\cMY \Teq \cMX\).
Since \(Y_{< \cMDiff} \Teq X_{< \cMDiff}\) and \(\cMY, \cMX \Tisle \cMDiff\),
it follows that if \(\cMY \Tislt \cMDiff\) or \(\cMX \Tislt \cMDiff\), then \(\cMY \Teq \cMX\).
If \(\cMY \Tisge \cMDiff\) and \(\cMX \Tisge \cMDiff\), then \(\cMY \Teq \cMDiff = \cMX\).
We now show (\ref{orgtarget40}).
Since \(y^i_\cMDiff \Teq x^i_\cMDiff \ominus c^i_\cMDiff\), we see that
\begin{align*}
 \sgY_\cMDiff \Teq \bigoplus_{i \in \Omega} y^i_\cMDiff  \ominus \Compare{\cMDiff}{\cMY} &= \bigoplus_{i \in \Omega} x^i_\cMDiff  \ominus \left( \bigoplus_{i \in \Omega} c^i_\cMDiff \right) \ominus \Compare{\cMDiff}{\cMX} \\
 &= \sgX_\cMDiff \ominus \left( \bigoplus_{i \in \Omega} c^i_\cMDiff \right).
\end{align*}
Since \(\sgY_\cMDiff \Tneq \sgX_\cMDiff\), it follows that (\ref{orgtarget40}) holds, so \(\ord_{\ybs}(\sum c^i) \Teq \cMDiff = \mord_{\ybs}(C)\).
Therefore \(C \Tisin \sgCord[\ybs][[w]]\).

\comment{Proof of claim}
\label{sec:orgheadline46}
It remains to prove the claim.
We first show that \(x_\cMDiff \Tneq (0, \ldots, 0)\).
If \(\cMDiff \Teq \cMX\), then the assertion is obvious.
If \(\cMDiff \Tisgt \cMX\), then  \(\Compare{\cMDiff}{\cMX} \Teq 0\), so \(\sgY_\cMDiff \Tislt \sgX_\cMDiff = \bigoplus_i x_\cMDiff^i\), which implies that \(x_\cMDiff \Tneq (0, \ldots, 0)\).

We next show that
\begin{equation}
\label{orgtarget41}
\sgX_{\cMDiff} + \Compare{\cMDiff}{\cMX} \Tisle \sum_{i \in \Omega} x^i_{\cMDiff}.
\end{equation}
Since \(\sgX_\cMDiff \oplus \Compare{\cMDiff}{\cMX} \Teq \bigoplus_i x^i_\cMDiff \le \sum_i x^i_\cMDiff\),
if \(\sgX_\cMDiff + \Compare{\cMDiff}{\cMX} \Teq \sgX_\cMDiff \oplus \Compare{\cMDiff}{\cMX}\), then (\ref{orgtarget41}) holds.
Suppose that \(\sgX_\cMDiff + \Compare{\cMDiff}{\cMX} \Tneq \sgX_\cMDiff \oplus \Compare{\cMDiff}{\cMX}\).
Then \(\Compare{\cMDiff}{\cMX} \Teq 1\) and \(\sgX_\cMDiff \oplus \Compare{\cMDiff}{\cMX} \Teq 0\).
Hence \(\bigoplus_i x^i_\cMDiff \Teq 0\).
Since \(x_\cMDiff \Tneq (0, \ldots, 0)\), it follows that \(\sum x^i_\cMDiff \Tisge \ybs[\cMDiff] = \sgX_\cMDiff + \Compare{\cMDiff}{\cMX}\).

We now construct \(\ccu\) satisfying (C1)--(C4).
Since \(\Compare{\cMDiff}{\cMY}, \Compare{\cMDiff}{\cMX} \Tisin \set{0, 1}\) and \(\sgY_{\cMDiff} \Tislt \sgX_{\cMDiff}\), we see that
\[
 \sgY_{\cMDiff} \oplus \Compare{\cMDiff}{\cMY} \Teq \sgY_\cMDiff + \Compare{\cMDiff}{\cMY} \le \sgX_\cMDiff + \Compare{\cMDiff}{\cMX} \le \sum_{i \in \Omega} x^i_{\cMDiff}.
\]
By Lemma \ref{orgtarget33}, there exists \(\ccu \Tin \range{\ybs[\cMDiff]}^\numcoins\) satisfying (C1), (C2), and
\[
 \wt(x_\cMDiff - u) \Tisle \min \set{\ybs[\cMDiff] - 1, \numcoins}.
\]
 We divide the proof into two cases.

\resetmycaseroman
 
 \begin{mycaseroman}[\(\cMDiff > 0\)]
 \comment{CaseRoman. [\(\cMDiff > 0\)]}
\label{sec:orgheadline47}
Since \(\min \set{\ybs[\cMDiff] - 1, \numcoins} \Tisle w\), we see that \(\ccu\) satisfies (C4).
If \(\ccu \Tneq x_\cMDiff\), then \(\ccu^0 \Tislt x_\cMDiff^0\), so \(u\) also satisfies (C3).
Suppose that \(\ccu \Teq x_\cMDiff\). Then
\begin{equation}
\label{orgtarget42}
 \sgY_\cMDiff \oplus \Compare{\cMDiff}{\cMY} \Teq \bigoplus_{i \in \Omega} \ccu^i = \bigoplus_{i \in \Omega}x^i_{\cMDiff} = \sgX_\cMDiff \oplus \Compare{\cMDiff}{\cMX}.
\end{equation}
It follows that \(\Compare{\cMDiff}{\cMY} \Tneq \Compare{\cMDiff}{\cMX}\) since \(\sgY_\cMDiff \Tislt \sgX_\cMDiff\). 
Suppose that \(\Compare{\cMDiff}{\cMY} \Teq 1\) and \(\Compare{\cMDiff}{\cMX} \Teq 0\).
Then \(\cMDiff \Tisle \cMY\) and \(\cMDiff \Tisgt \cMX\), so \(\cMX \Tislt \cMDiff = \cMY\) since \(\cMY \Tisle \cMDiff\). Hence \(u\) satisfies (C3).
Suppose that \(\Compare{\cMDiff}{\cMY} \Teq 0\) and \(\Compare{\cMDiff}{\cMX} \Teq 1\).
By (\ref{orgtarget42}), 
\[
 \sgY_\cMDiff \Teq \bigoplus_{i \in \Omega} x^i_\cMDiff = \sgX_\cMDiff \oplus 1 < \sgX_\cMDiff.
\]
This implies that \(\bigoplus_i x^i_\cMDiff \Teq 0\),
and hence that \(\sum_i x_\cMDiff^i \Tisge \ybs[\cMDiff]\) since \(x_\cMDiff \Tneq (0, \ldots, 0)\). 
We can now find \((\tilde{u}^0, \ldots, \tilde{u}^{\numcoins - 1})\) satisfying \(\sum_i \tilde{\ccu}^i \Teq \sum_i x^i_\cMDiff - \ybs[\cMDiff]\) and (C1)--(C3) in the same way as in the proof of Lemma \ref{orgtarget33}.
Let \(\tilde{u} \Teq (\tilde{u}^0, \ldots, \tilde{u}^{\numcoins - 1})\).
Since \(\cMDiff \Tisgt 0\),
\[
 \wt(x_\cMDiff - \tilde{\ccu}) \Tisle \min \set{\ybs[\cMDiff], \numcoins}  \le w,
\]
so \(\tilde{u}\) also satisfies (C4). 
 
\end{mycaseroman}

 \begin{mycaseroman}[\(\cMDiff = 0\)]
 \comment{CaseRoman. [\(\cMDiff = 0\)]}
\label{sec:orgheadline48}
We see that \(x_0 \Tneq \ccu\) because
\[
 \bigoplus_{i \in \Omega} x^i_0 \Teq \sgX_0 \oplus [0 \le \cMX] = \sgX_0 \oplus 1
\]
and
\[
 \bigoplus_{i \in \Omega} u^i \Teq \sgY_0 \oplus [0 \le \cMY] = \sgY_0 \oplus 1.
\]
Thus \(u\) satisfies (C3).
Since \(\wt(x_0 - u) \Tisle \min \set{\ybs[0] - 1, \numcoins}\),
we see that (C4) holds when \(\min \set{\ybs[0] - 1, \numcoins - [\ybs[0] < 2\numcoins]} \Teq \ybs[0] - 1\) or \(\numcoins\).
Suppose that 
\[
 \min \set{\ybs[0] - 1, \numcoins - [\ybs[0] < 2\numcoins]} \Teq \numcoins - 1 < \ybs[0] - 1.
\]
Then \(\numcoins \Tislt \ybs[0] < 2\numcoins\) and \(\numcoins - 1 \le w\). 
We show that there exists \(\tilde{\ccu}\) satisfying (C1)--(C4).
If \(x^{\numcoins - 1}_0 \Teq 0\), then \(\ccu^{\numcoins - 1} = 0\), so \(\ccu\) itself satisfies (C1)--(C4).
Suppose that \(x^{\numcoins - 1}_0 \Tisge 1\). We show that
\begin{equation}
\label{orgtarget43}
 x_0^0 + \cdots + x^{\numcoins - 2}_0 \Tisge \sgY_0 \oplus 1 \ominus x_0^{\numcoins - 1}.
\end{equation}
If \(x_0^{\numcoins - 1} \Teq 1\), then \(\sgX_0 \Teq (\bigoplus_i x^i_0) \ominus 1 = x^0_0 \oplus \cdots \oplus x^{\numcoins - 2}_0\), and hence
\[
 x_0^0 + \cdots + x_0^{\numcoins - 2} \Tisge x_0^0 \oplus \cdots \oplus x^{\numcoins - 2}_0 =  \sgX_0  >  \sgY_0 = \sgY_0 \oplus 1 \ominus x_0^{\numcoins - 1}.
\]
If \(x_0^{\numcoins - 1} \Tisge 2\), then, since \(x^0_0 \Tisge x^1_0 \ge \cdots \ge x^{\numcoins - 1}_0 \ge 2\) and \(2\numcoins \Tisgt \ybs[0]\),
\[
 x_0^0 + \cdots + x_0^{\numcoins - 2} \Tisge 2 (\numcoins - 1) \ge  \ybs[0] - 1 \ge \sgY_0 \oplus 1 \ominus x_0^{\numcoins - 1}.
\]
Thus (\ref{orgtarget43}) holds.
By Lemma \ref{orgtarget33}, there exists \((\tilde{\ccu}^0, \ldots, \tilde{\ccu}^{\numcoins - 2})\) such that \(\tilde{\ccu}^0 \oplus \cdots \oplus \tilde{\ccu}^{\numcoins - 2} \Teq \sgY_0 \oplus 1 \ominus x_0^{\numcoins - 1}\) and \(0 \le x_0^i - \tilde{\ccu}^i \Tisle x_0^0 - \tilde{\ccu}^0\).
Thus \((\tilde{u}^0, \ldots, \tilde{u}^{\numcoins - 2}, x_0^{\numcoins - 1})\) satisfies (C1)--(C4).
This completes the proof.
\qed
 
\end{mycaseroman} 
 
\end{mycase} 

\comment{bibliography}
\label{sec:orgheadline51}
\bibliographystyle{abbrv}

\end{document}